\newtheorem{thm}{Theorem}
\newtheorem{lem}{Lemma}
\newtheorem{cor}{Corollary}
\newtheorem{conj}{Conjecture}
\renewenvironment{proof}{{\bfseries Proof.}}{}
\renewcommand{\maketitle}{\bgroup\setlength{\parindent}{0pt}
\begin{flushleft}
  \textbf{\@title}

  \@author
\end{flushleft}\egroup
}
\begin{document}
\title{\bf \Large A note on the augmented Zagreb index of cacti with fixed number of vertices and cycles}

\author{AKBAR ALI$^{\dag,\ddag,*}$ \& AKHLAQ AHMAD BHATTI$^{\dag}$\\
$^{\dag}$Department of Mathematics, National University of Computer \& Emerging Sciences, B-Block, Faisal Town, Lahore-Pakistan.\\
$^{\ddag}$Department of Mathematics, University of Gujrat, Gujrat-Pakistan.\\
$^{*}$Corresponding author: akbarali.maths@gmail.com}

\date{}
\maketitle

\renewcommand{\abstractname}{ABSTRACT}
\begin{abstract}
Let $\mathcal{C}_{n,k}$ be the family of all cacti with $k$ cycles and $n\geq4$ vertices. In the present note, the element of the class $\mathcal{C}_{n,k}$ having minimum augmented Zagreb index ($AZI$) is characterized. Moreover, some structural properties of the graph(s) having maximum $AZI$ value over the collection $\mathcal{C}_{n,0}$, are also reported.
\end{abstract}
{\bf Keywords:} Topological index; augmented Zagreb index; cactus graph.

\section*{Introduction}

All the graphs considered in the present study are simple, finite, undirected and connected.  The vertex set and edge set of a graph $G$ will be denoted by $V(G)$ and $E(G)$ respectively. Undefined notations and terminologies from (chemical) graph theory can be found in (Harary, 1969; Trinajsti\'{c}, 1992).

Topological indices are numerical parameters of a graph which are invariant under graph isomorphisms. There are many topological indices which are often used to model the physicochemical properties of chemical compounds in quantitative structure-property relation (QSPR) and quantitative structure-activity relation (QSAR) studies (Gutman \& Furtula, 2010; Trinajsti\'{c}, 1992). The atom-bond connectivity ($ABC$) index is one of such topological indices. The $ABC$ index was introduced by Estrada \textit{et al.} (1998) and it is defined as:
\[ABC(G)=\sum_{uv\in E(G)}\sqrt{\frac{d_{u}+d_{v}-2}{d_{u}d_{v}}},\]
where $d_{u}$ is the degree of the vertex $u\in V(G)$ and $uv$ is edge connecting the vertices $u$ and $v$. Details about this index can be found in the survey (Gutman \textit{et al.}, 2013), recent papers (Ashrafi \textit{et al.}, 2015; Dimitrov, 2014; Lin \textit{et al.}, 2015; Palacios, 2014; Raza, Bhatti \& Ali, 2015) and the related references cited therein.

Inspired by work on the $ABC$ index Furtula, Graovac \& Vuki\v{c}evi\'{c} (2010) proposed the following topological index and named it augmented Zagreb index ($AZI$):
\[AZI(G)=\displaystyle\sum_{uv\in E(G)}\left(\frac{d_{u}d_{v}}{d_{u}+d_{v}-2}\right)^{3}.\]
The prediction power of $AZI$ is better than $ABC$ index in the study of heat of formation for heptanes and octanes (Furtula, Graovac \& Vuki\v{c}evi\'{c}, 2010). Furtula, Gutman \& Dehmer (2013) undertook a comparative study of the structure-sensitivity of twelve topological indices by using trees and they concluded that $AZI$ has the greatest structure sensitivity. In the papers (Gutman \& To\v{s}ovi\'{c},  2013; Gutman, Furtula \& Elphick, 2014), the correlating ability of several topological indices was tested for the case of standard heats (enthalpy) of formation and normal boiling points of octane isomers, and it was found that the $AZI$ possess the best correlating ability among the examined topological indices. Hence it is meaningful to study the mathematical properties of $AZI$, especially bounds and characterization of the extremal elements of renowned graph families.

In (Furtula, Graovac \& Vuki\v{c}evi\'{c}, 2010), the extremal $n$-vertex chemical trees with respect to the $AZI$ were determined and the $n$-vertex tree having minimum $AZI$ value was characterized. Huang, Liu \& Gan (2012) gave various bounds on the $AZI$ for several families of connected graphs (e.g. chemical graphs, trees, unicyclic graphs, bicyclic graphs, etc.). Wang, Huang \& Liu (2012) established some new bounds on the $AZI$ of connected graphs and improved some results of the papers (Furtula, Graovac \& Vuki\v{c}evi\'{c}, 2010; Huang, Liu \& Gan, 2012). The present authors (Ali, Raza \& Bhatti, 2016) derived tight upper bounds for the $AZI$ of chemical bicyclic and unicyclic graphs. In (Ali, Bhatti \& Raza, 2016), the authors characterized the $n$-vertex graphs having the maximum $AZI$ value with fixed vertex connectivity (and matching number). Recently Zhan, Qiao \& Cai (2015) characterized the $n$-vertex unicyclic with the first and second minimal $AZI$ value, and $n$-vertex bicyclic graphs with the minimal $AZI$ value.

A graph $G$ is a cactus if and only if every edge of $G$ lies on at most one cycle. In recent years, many researchers studied the problem of characterizing the extremal cacti with respect to several well known topological indices over the class of all cacti with some fixed parameters. Some extremal results about the cacti can be found in the papers (Lu, Zhang \& Tian, 2006; Ali, Bhatti \& Raza, 2014; Chen, 2016; Du \textit{et al.}, 2015) and in the related references cited therein. In the present note, the cactus with minimum $AZI$ value is determined among all the cacti with fixed number of vertices and cycles. Moreover, some structural properties of the tree(s) having maximum $AZI$ value over the set of all trees with fixed number of vertices, are also reported.

\section*{The $AZI$ of Cacti}

Let $\mathcal{C}_{n,k}$ be the family of all cacti with $k$ cycles and $n\geq4$ vertices. As usual, denote by $S_{n}$ and $P_{n}$ the star graph and path graph (respectively) on $n$ vertices. A vertex of a graph is said to be pendent if it has degree 1. Let $G^{0}(n,k)\in\mathcal{C}_{n,k}$ be the cactus obtained from $S_{n}$ by adding $k$ mutually independent edges between the pendent vertices (see the Figure \ref{f1}). Note that $G^{0}(n,0)\cong S_{n}$.
\renewcommand{\figurename}{Fig.}
 \begin{figure}[H]
   \centering
    \includegraphics[width=3in, height=2in]{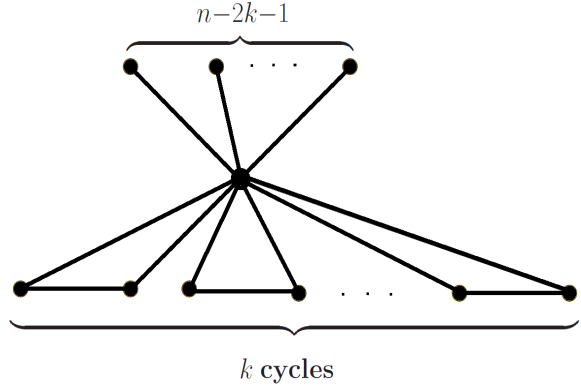}
    \caption{The cactus $G^{0}(n,k)$.}
     \label{f1}
      \end{figure}
Routine calculations show that
\[AZI(G^{0}(n,k))=(n-2k-1)\left(\frac{n-1}{n-2}\right)^{3}+24k.\]
Let us take
\[F(n,k)=(n-2k-1)\left(\frac{n-1}{n-2}\right)^{3}+24k.\]
Note that the collection $\mathcal{C}_{n,0}$ consist of all trees on $n$ vertices. Furtula, Graovac and Vuki$\check{c}$evi$\acute{c}$ characterized the $n$-vertex tree having minimum $AZI$ value:

\begin{lem}\label{L77}
(Furtula, Graovac \textsl{\&} Vuki\v{c}evi\'{c}, 2010) Let $T$ be any tree with $n\geq3$ vertices. Then
$$AZI(T)\geq F(n,0).$$
The equality holds if and only if $T\cong G^{0}(n,0)$.
\end{lem}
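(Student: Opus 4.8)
The plan is to prove the bound edge by edge rather than through structural transformations. The per-edge weight $\varphi(x,y)=\left(\frac{xy}{x+y-2}\right)^{3}$ is not monotone in either argument (its $x$-partial has the sign of $y-2$, so it increases in $x$ when $y>2$ but decreases in $x$ when $y=1$), which makes a global edge-switching argument awkward. Instead I would show that in every tree each individual edge already carries at least the star-edge weight $\left(\frac{n-1}{n-2}\right)^{3}$, and then simply sum over the $n-1$ edges.

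First I would record two elementary facts about $\varphi$. For a pendant edge one endpoint is a leaf, so its weight is $\varphi(1,d)=\left(\frac{d}{d-1}\right)^{3}$, where $d$ is the degree of the non-leaf endpoint; since $t\mapsto\left(\frac{t}{t-1}\right)^{3}$ is strictly decreasing and every vertex of an $n$-vertex tree has degree at most $n-1$, this weight is at least $\left(\frac{n-1}{n-2}\right)^{3}$, with equality only when $d=n-1$. For an edge whose endpoints both have degree at least $2$, a short computation with the partial derivatives of $xy/(x+y-2)$ shows this ratio is nondecreasing in each variable on $[2,\infty)$, so its minimum over $x,y\ge2$ is attained at $x=y=2$; hence such an internal edge has weight at least $2^{3}=8$.

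Combining these, and using $8\ge\left(\frac{n-1}{n-2}\right)^{3}$ for all $n\ge3$, every edge of every $n$-vertex tree carries weight at least $\left(\frac{n-1}{n-2}\right)^{3}$. Summing over the $n-1$ edges yields $AZI(T)\ge(n-1)\left(\frac{n-1}{n-2}\right)^{3}=F(n,0)$. For the equality part I would argue that equality forces each edge to meet its bound exactly: no internal edge may occur, since $8>\left(\frac{n-1}{n-2}\right)^{3}$ once $n\ge4$, so every edge is pendant, and each pendant edge must then have non-leaf endpoint of degree $n-1$. A tree all of whose $n-1$ edges are incident to a common vertex of degree $n-1$ is precisely $S_{n}\cong G^{0}(n,0)$; the degenerate case $n=3$ is immediate because $P_{3}\cong S_{3}$ is the only tree on three vertices.

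The only genuinely delicate point is this equality analysis. The per-edge lower bound is loose for internal edges, so ruling them out requires the separate strict inequality $8>\left(\frac{n-1}{n-2}\right)^{3}$ for $n\ge4$, after which forcing every pendant edge to attain its bound pins the maximum degree to $n-1$ and hence identifies the star. Establishing the two monotonicity facts about $\varphi$ is routine, and the summation step is immediate, so I expect essentially all of the real work to sit in this final uniqueness argument.
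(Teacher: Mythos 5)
Your proof is correct, but there is nothing in the paper to compare it against: Lemma \ref{L77} is quoted from Furtula, Graovac \& Vuki\v{c}evi\'{c} (2010) without proof, and serves in this paper only as a base case of the induction establishing Theorem \ref{t1}. On its own merits your edge-by-edge argument works. The pendant-edge bound $\Psi(1,d)=\left(\frac{d}{d-1}\right)^{3}\geq\left(\frac{n-1}{n-2}\right)^{3}$, with equality precisely when $d=n-1$, is the monotonicity the paper records as Lemma \ref{L6}; the internal-edge bound $\Psi(x,y)\geq\Psi(2,2)=8$ for $x,y\geq2$ follows since $\frac{\partial}{\partial x}\,\frac{xy}{x+y-2}=\frac{y(y-2)}{(x+y-2)^{2}}\geq0$ there; and $8\geq\left(\frac{n-1}{n-2}\right)^{3}$ amounts to $2(n-2)\geq n-1$, which holds for $n\geq3$ and is strict for $n\geq4$. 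Your equality analysis also closes properly: for $n\geq4$ the strict inequality on internal edges forces every edge to be pendant with its non-leaf end of degree $n-1$, and since the degree sum $2(n-1)$ admits at most one vertex of degree $n-1$ when $n\geq3$, all $n-1$ edges meet at that vertex, i.e. $T\cong S_{n}\cong G^{0}(n,0)$; the case $n=3$ is trivial. Compared with the transformation-plus-induction style used for the paper's own results (Theorem \ref{t1}, Theorem \ref{t22}), your route is more elementary and fully self-contained, which is a genuine gain. Its limitation is that it does not extend to $k\geq1$: the extremal cactus $G^{0}(n,k)$ carries edges of two different weights (weight $8$ on each of the $3k$ triangle edges, weight $\left(\frac{n-1}{n-2}\right)^{3}$ on the pendent edges), so a uniform per-edge bound yields only $(n+k-1)\left(\frac{n-1}{n-2}\right)^{3}$, which is strictly smaller than $F(n,k)$ for $n\geq4$, and hence cannot recover Lemma \ref{L88} or Theorem \ref{t1} --- which is exactly why the paper argues by induction there.
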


The class $\mathcal{C}_{n,1}$ consist of all unicyclic graphs on $n$ vertices. The following result about the characterization of unicyclic graph having minimum $AZI$ value is due to Huang, Liu and Gan:
\begin{lem}\label{L88}
(Huang, Liu \textsl{\&} Gan, 2012) Let $G$ be any unicyclic graph with $n\geq4$ vertices, then
$$AZI(G)\geq F(n,1).$$
The equality holds if and only if $G\cong G^{0}(n,1)$.
\end{lem}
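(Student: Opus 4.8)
The plan is to first confirm by direct computation that $G^{0}(n,1)$ attains the claimed value. Since $G^{0}(n,1)$ is a triangle on vertices $\{c,u,v\}$ with $n-3$ further pendant vertices attached to $c$, its edges split into $n-3$ degree-pairs of the form $(n-1,1)$, two pairs $(n-1,2)$, and one pair $(2,2)$, whence $AZI(G^{0}(n,1)) = (n-3)\left(\frac{n-1}{n-2}\right)^{3} + 2\cdot 8 + 8 = F(n,1)$. It then suffices to show that any minimizer $G$ of $AZI$ over $\mathcal{C}_{n,1}$ can be converted into $G^{0}(n,1)$ by a finite sequence of local modifications, each of which preserves membership in $\mathcal{C}_{n,1}$ and strictly lowers $AZI$ unless the affected part of $G$ already agrees with $G^{0}(n,1)$.

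Let $C$ denote the unique cycle of $G$. I would use three reductions. First, a \emph{branch-flattening} step: if some vertex lying off $C$ has degree at least $2$ (so that a pendant vertex sits at distance $\geq 2$ from $C$), relocate an outermost pendant vertex onto the vertex of largest degree; because $f(x,1)=\left(\frac{x}{x-1}\right)^{3}$ is strictly decreasing in $x$, moving pendant edges onto higher-degree endpoints lowers their weight, and iterating forces every off-cycle vertex to be pendant. Second, a \emph{concentration} step: if pendant vertices hang from two distinct vertices, transfer them all onto the single vertex of largest degree, again exploiting the monotonicity of $f(\cdot,1)$ and pushing that hub onto the cycle. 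Third, a \emph{cycle-shortening} step: as long as $C$ has length $\geq 4$, deleting one cycle edge and reattaching its endpoint replaces the long cycle by a shorter one while trading degree-$2$ cycle edges for cheaper pendant edges; repeating this collapses $C$ to a triangle. Composing the three reductions leaves precisely the configuration of $G^{0}(n,1)$, namely a triangle with two degree-$2$ vertices and all remaining vertices pendant at the third.

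Each step rests on an inequality for the weight function $f(x,y)=\left(\frac{xy}{x+y-2}\right)^{3}$, and the genuine difficulty is its non-monotone behaviour: writing $g(x)=\frac{xy}{x+y-2}$ one computes $g'(x)=\frac{y(y-2)}{(x+y-2)^{2}}$, so raising the degree of a vertex \emph{increases} the weights of its edges to neighbours of degree $>2$ but \emph{decreases} those to pendant neighbours. Hence every modification that changes the degree of a vertex $w$ disturbs all edges incident to $w$ simultaneously, and the sign of the net change depends on how many neighbours of $w$ have degree $1$ rather than degree $\geq 2$; the crux is a careful accounting showing that in each reduction the savings on the pendant edges outweigh any increase on the few cycle edges joining vertices of degree exceeding $2$, and the fact that in $G^{0}(n,1)$ the hub is adjacent only to vertices of degree $\leq 2$ is exactly what makes it the stable endpoint. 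The only configuration escaping the reductions is the pendant-free case $G=C_{n}$, for which $AZI(C_{n})=8n>F(n,1)$ for all $n\geq4$, so $C_{n}$ is never optimal and a pendant vertex is always available; tracking the strict inequalities through the three reductions then delivers the uniqueness of the minimizer $G^{0}(n,1)$.
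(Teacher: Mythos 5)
You should first note that the paper itself does not prove this lemma at all: it imports it from Huang, Liu \& Gan (2012) and uses it only as the base case $k=1$ of the induction in Theorem \ref{t1}. So your proposal has to stand on its own, and it does not. Your verification that $AZI(G^{0}(n,1))=F(n,1)$ is correct, and you correctly diagnose the central difficulty (raising a vertex's degree cheapens its pendant edges but makes its edges to neighbours of degree $\geq 3$ more expensive). But having diagnosed it, you dispose of it by assertion: ``the crux is a careful accounting showing that \dots the savings on the pendant edges outweigh any increase.'' That assertion is false for the moves exactly as you prescribe them. Concretely, for the cycle-shortening step take $n=6$: the cycle $v_{1}v_{2}v_{3}v_{4}$ with one pendant vertex attached to each of $v_{3}$ and $v_{4}$, so $d_{v_{1}}=d_{v_{2}}=2$ and $d_{v_{3}}=d_{v_{4}}=3$. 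Deleting the cycle edge $v_{2}v_{3}$ and closing the shorter cycle with $v_{1}v_{3}$ (so $v_{2}$ becomes pendant at $v_{1}$) does trade a degree-$2$ cycle edge for a cheaper pendant edge, yet the three affected weights change from $\Psi(2,2)+\Psi(2,3)+\Psi(2,3)=24$ to $\Psi(3,1)+2\Psi(3,3)=\frac{27}{8}+2\left(\frac{9}{4}\right)^{3}=\frac{837}{32}>26$, so $AZI$ strictly \emph{increases}. (In the same graph, the alternative cut that lands the pendant on $v_{4}$ instead does decrease $AZI$; which cut works depends on the surrounding degrees, and your plan never specifies or analyzes this.) The concentration step fails similarly: moving a pendant vertex from a degree-$2$ vertex onto a degree-$3$ vertex all of whose neighbours have degree $3$ saves $8-\left(\frac{4}{3}\right)^{3}\approx 5.63$ on the moved edge but costs $3\left[\left(\frac{12}{5}\right)^{3}-\left(\frac{9}{4}\right)^{3}\right]\approx 7.30$ in collateral increases. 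So the monotonicity of $\Psi(\cdot,1)$ alone, which is all you actually invoke, does not make any of your three reductions monotone for $AZI$; proving that in every graph other than $G^{0}(n,1)$ \emph{some} strictly decreasing modification exists is not a detail to be filled in --- it is the entire theorem.

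This is precisely why the known proofs (Huang--Liu--Gan for this lemma, and the paper's Theorem \ref{t1} for general $k$) avoid fixed-$n$ relocations altogether. They induct on $n$: either delete all pendant neighbours of a single vertex, or suppress/delete degree-$2$ vertices of the cycle, landing in a smaller class ($\mathcal{C}_{n-p,k}$, $\mathcal{C}_{n-1,k}$ or $\mathcal{C}_{n-2,k-1}$), and then compare with $F$. The collateral damage at the one high-degree vertex whose degree drops is controlled by a device your plan lacks: by Lemma \ref{L5} the difference $\Psi(x,y)-\Psi(x-p,y)$ is increasing in $y$ on $[2,\infty)$, and since $\Psi(x,2)=8$ for every $x\geq 2$, this difference is nonnegative at $y=2$; hence the sum of collateral changes over neighbours of degree $\geq 2$ is bounded below by $0$. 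Deletion plus this one-sided estimate sidesteps the sign ambiguity that defeats your relocation moves. If you want to salvage your argument, replace relocation-within-fixed-$n$ by deletion-plus-induction and invoke Lemma \ref{L5} in exactly this way.
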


The main result of this note will be proved with the help of following lemma:

\begin{lem}\label{L5}
For a fixed $p\geq1$, let
\[f(x,y)=\left(\frac{xy}{x+y-2}\right)^{3}-\left(\frac{(x-p)y}{x-p+y-2}\right)^{3},\]
where $x\geq2$, $x>p$ and $y\geq2$. Then the function $f(x,y)$ is increasing for $y$ in the interval $[2,\infty)$.
\end{lem}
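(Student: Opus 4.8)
The plan is to hold $x$ (hence $x-p$) fixed and differentiate $f$ with respect to $y$, showing the partial derivative stays positive on $[2,\infty)$. To organize the computation I would introduce, for a parameter $a>0$, the single-term function
\[g_a(y)=\left(\frac{ay}{a+y-2}\right)^{3},\]
so that $f(x,y)=g_x(y)-g_{x-p}(y)$. Since $\frac{d}{dy}\frac{ay}{a+y-2}=\frac{a(a-2)}{(a+y-2)^2}$, the chain rule gives
\[g_a'(y)=\frac{3a^3y^2(a-2)}{(a+y-2)^4}=3y^2\,\phi_y(a),\qquad \phi_y(a):=\frac{a^3(a-2)}{(a+y-2)^4}.\]
Hence $\frac{\partial f}{\partial y}=3y^2\big(\phi_y(x)-\phi_y(x-p)\big)$, and because $y\geq2$ forces $3y^2>0$, the whole problem reduces to proving $\phi_y(x)>\phi_y(x-p)$ for every admissible triple $x,p,y$.

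I would establish this inequality through a short case analysis on the size of $x-p$ relative to $2$, since $\phi_y$ is not monotone on all of $(0,\infty)$ and so global monotonicity cannot be invoked directly. First, when $x-p\geq2$ both arguments lie in $[2,\infty)$, and there I would show $\phi_y$ is increasing: writing $c=y-2\geq0$ and differentiating $\phi_y(a)=\frac{a^4-2a^3}{(a+c)^4}$ in $a$, clearing the positive factor $(a+c)^5$ reduces the sign to that of the numerator, which factors as $2a^2\big(a(2c+1)-3c\big)$; the bracket is linear and increasing in $a$ and already equals $c+2>0$ at $a=2$, hence is positive throughout $[2,\infty)$. Monotonicity then yields $\phi_y(x)>\phi_y(x-p)$ because $x>x-p\geq2$. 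Second, when $0<x-p<2$, I would simply compare signs: $x\geq2$ makes $\phi_y(x)\geq0$, while the factor $(x-p)-2<0$ makes $\phi_y(x-p)<0$, so again $\phi_y(x)>\phi_y(x-p)$.

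Combining the two cases shows $\frac{\partial f}{\partial y}>0$ on $[2,\infty)$, proving that $f(x,y)$ is (strictly) increasing in $y$. The only genuinely delicate point is the second case: because $\phi_y$ fails to be monotone near the origin, the naive attempt to prove the inequality purely from monotonicity of $\phi_y$ breaks down, and one must instead exploit that $a\mapsto a^3(a-2)$ changes sign at $a=2$. The algebra of the first case (the factorisation of the numerator of $\phi_y'$) is routine but should be carried out carefully so as to land on the transparently positive expression $a(2c+1)-3c$.
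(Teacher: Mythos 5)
Your proposal is correct and follows essentially the same route as the paper: the same formula for $\partial f/\partial y$, the same case split on whether $x-p$ exceeds $2$ (sign comparison in the small case, monotonicity of $a\mapsto a^{3}(a-2)/(a+y-2)^{4}$ in the large case), and the same factorisation $2a^{2}\bigl(a(2y-1)-3(y-2)\bigr)$ of the relevant derivative, which the paper writes as $h(x,y)=x(2y-3)-3y+6$.
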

\begin{proof}
Simple computations yield
\begin{equation}\label{Eq1}
\frac{\partial f(x,y)}{\partial y}=3y^{2}\left[\frac{x^{3}(x-2)}{(x+y-2)^{4}}-\frac{(x-p)^{3}(x-p-2)}{(x-p+y-2)^{4}}\right].
\end{equation}
Note that if $0<x-p\leq2$ then $\frac{\partial f(x,y)}{\partial y}$ is obviously positive and hence the lemma follows. Let us assume that $x-p>2$ and $g(x,y)=\frac{x^{3}(x-2)}{(x+y-2)^{4}}$. Then the first order partial derivative of $g(x,y)$ with respect to $x$, can be written as
\[\frac{\partial g(x,y)}{\partial x}=\frac{2x^{2}}{(x+y-2)^{5}}\left[x(2y-3)-3y+6\right].\]
It can be easily seen that the function $h(x,y)=x(2y-3)-3y+6$ is increasing in both $x$ and $y$ for $x,y\geq2$ and $h(2,2)$ is positive. Hence $\frac{\partial g(x,y)}{\partial x}$ is positive for all $x\geq2$ and $y\geq2$, which means that the function $g(x,y)$ is increasing in $x$. Therefore, from Eq. (\ref{Eq1}), the desired result follows.

\end{proof}

\vspace*{0.35cm}
The following elementary result will also be helpful in proving the main result of this note.

\begin{lem}\label{L6}
The function $f(x)=\left(\frac{x}{x-1}\right)^{3}$ is decreasing in the interval $[2,\infty)$.
\end{lem}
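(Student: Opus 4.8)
The plan is to reduce the claim to a one-line sign check, since $f$ is a smooth function of $x$ on $(1,\infty)$ and in particular on $[2,\infty)$. First I would compute the first derivative directly via the chain rule: writing $u(x)=\frac{x}{x-1}$, one has $u'(x)=\frac{-1}{(x-1)^{2}}$, and therefore $f'(x)=3u(x)^{2}u'(x)=-\frac{3x^{2}}{(x-1)^{4}}$. The numerator $3x^{2}$ and the denominator $(x-1)^{4}$ are both strictly positive for every $x\geq2$, so $f'(x)<0$ throughout the interval. A function with strictly negative derivative on an interval is strictly decreasing there, which is exactly the assertion.

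An equally short alternative, avoiding calculus, is to decompose $\frac{x}{x-1}=1+\frac{1}{x-1}$. On $[2,\infty)$ the map $x\mapsto x-1$ is increasing and positive, so $x\mapsto\frac{1}{x-1}$ is decreasing, and hence so is $x\mapsto 1+\frac{1}{x-1}$; composing this decreasing positive function with the increasing cubing map $t\mapsto t^{3}$ yields a decreasing function. Either argument settles the lemma, and the derivative route is the more consistent with the style used in Lemma \ref{L5} above.

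There is no genuine obstacle to overcome here: the content is a routine monotonicity statement, and the only point that requires (trivial) attention is confirming that the sign of $f'$ is negative on the \emph{whole} interval rather than merely at its endpoints. Since $f'(x)=-\frac{3x^{2}}{(x-1)^{4}}$ is manifestly negative for all $x>1$, this is immediate, and I would present the derivative computation as the cleanest self-contained justification.
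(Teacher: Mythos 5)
Your derivative computation is correct ($f'(x)=-\tfrac{3x^{2}}{(x-1)^{4}}<0$ for all $x>1$), and the algebraic alternative via $\tfrac{x}{x-1}=1+\tfrac{1}{x-1}$ is equally valid. Note that the paper states Lemma \ref{L6} as an ``elementary result'' and offers no proof at all, so either of your two arguments would serve as a legitimate filler; there is no paper proof to compare against.
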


For a vertex $u$ of a graph $G$, denote by $N_{G}(u)$ (the neighborhood of $u$) the set of all vertices adjacent with $u$. Now we are in position to prove that $G^{0}(n,k)$ has the minimum $AZI$ value among all the cacti in the collection $\mathcal{C}_{n,k}$.

\begin{thm}\label{t1}
Let $G$ be any cactus belongs to the collection $\mathcal{C}_{n,k}$. Then
$$AZI(G)\geq F(n,k)$$
with equality if and only if $G\cong G^{0}(n,k)$.
\end{thm}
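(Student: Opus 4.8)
The plan is to show that any $AZI$-minimizer $G\in\mathcal{C}_{n,k}$ must possess a universal vertex, i.e. a vertex $u$ with $d_u=n-1$, and then to read off the full structure from the cactus constraint. The point of the reduction is that this endgame is essentially free: if $u$ is adjacent to every other vertex, then no two of the remaining edges can share an endpoint (two edges $ab,ac$ with $a,b,c\in N_G(u)$ would place the edge $ua$ on the two distinct triangles $uab$ and $uac$, violating the cactus property), so the edges not incident to $u$ form a matching, each matched pair spanning a triangle through $u$. Since $G$ has exactly $k$ cycles, this matching has size $k$, the matched vertices have degree $2$ and the rest degree $1$; hence $G\cong G^{0}(n,k)$, and $AZI(G)=F(n,k)$ by the routine computation already recorded. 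Thus the whole theorem collapses to the claim that minimality forces a universal vertex.

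To organize the proof of that claim I would induct on $k$, taking Lemma \ref{L77} as the base case $k=0$ and Lemma \ref{L88} as the base case $k=1$, and I would drive the inductive step with two families of local transformations, each designed to strictly lower $AZI$ unless $G$ is already $G^{0}(n,k)$. Let $u$ be a vertex of maximum degree. The first family relocates structure toward $u$: if a branch is attached to a vertex $z\ne u$, reattaching it at $u$ raises $d_u$, lowers $d_z$, and changes the $AZI$ by a sum of edge-contribution terms. The term coming from the moved pendant-type edge is controlled by Lemma \ref{L6} (the map $x\mapsto(x/(x-1))^{3}$ is decreasing, so a degree-$1$ neighbour contributes less when shifted toward the larger-degree endpoint), while the terms coming from the edges incident to $z$ and to $u$ as their degrees change by one are exactly of the form appearing in Lemma \ref{L5}; its monotonicity of $f$ in the second argument is what lets me compare the competing gains and losses and conclude the net change is negative. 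The second family shortens any cycle of length $\ell\ge 4$ into a shorter cycle plus a pendant-type attachment at $u$, keeping $n$, $k$, and the cactus property fixed, with the $AZI$ change again reduced to the comparison furnished by Lemma \ref{L5}. Together these force every cycle to be a triangle through $u$ and every vertex to lie within distance one of $u$, i.e. $d_u=n-1$.

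Once a clean triangle $uab$ with $d_a=d_b=2$ has been isolated in this way, an alternative and very clean closure is available through the induction rather than the structural endgame: deleting the edge $ab$ produces a cactus $G'\in\mathcal{C}_{n,k-1}$, to which the inductive hypothesis applies, giving $AZI(G')\ge F(n,k-1)$. Deleting $ab$ drops $d_a,d_b$ from $2$ to $1$ and turns the contributions of $ua,ub,ab$ from $8+8+8$ into $(d_u/(d_u-1))^{3}+(d_u/(d_u-1))^{3}$, so $AZI(G)-AZI(G')=24-2\bigl(d_u/(d_u-1)\bigr)^{3}$. With $d_u=n-1$ this equals precisely $F(n,k)-F(n,k-1)$, whence $AZI(G)\ge F(n,k)$, and equality propagates through the induction to pin down $G\cong G^{0}(n,k)$.

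The main obstacle I expect is not any single inequality but the sign bookkeeping that makes the transformations strict. Because $f(x,y)$ is only nondecreasing in $y$ (it vanishes at $y=2$ and the relevant edge-change terms carry opposite signs for degree-$1$ versus higher-degree neighbours), I must verify in each configuration that the favourable terms strictly dominate, so that every $G\not\cong G^{0}(n,k)$ is pushed to strictly smaller $AZI$ and cannot tie for the minimum; this is exactly where the strict monotonicity in Lemmas \ref{L5} and \ref{L6} is indispensable. Running in parallel with this is the combinatorial care needed to keep each move inside $\mathcal{C}_{n,k}$: every rerouted edge must be checked not to create a second cycle on some edge, and the cycle-shortening move in particular must be arranged so that no chord is introduced.
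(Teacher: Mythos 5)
Your endgame and your closure are both fine: a universal vertex in a cactus does force the non-star edges to form a matching of size $k$, hence $G\cong G^{0}(n,k)$, and your triangle-deletion computation $AZI(G)-AZI(G')=24-2\bigl((n-1)/(n-2)\bigr)^{3}=F(n,k)-F(n,k-1)$ is correct. The genuine gap is the load-bearing claim that a minimizer must have a universal vertex because relocating a branch onto a maximum-degree vertex $u$ (or shortening a cycle into $u$) strictly lowers $AZI$. That claim is not proved in your sketch, and it is false as stated. Write $\Psi(x,y)=\bigl(xy/(x+y-2)\bigr)^{3}$ as in Lemma \ref{L7}, and consider the tree in which $u$ has three neighbours $v_{1},v_{2},v_{3}$, each of degree $3$ with two pendent children. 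Moving a pendent child of $v_{1}$ onto $u$ changes $AZI$ by
\begin{align*}
&\bigl[\Psi(4,2)-\Psi(3,3)\bigr]+2\bigl[\Psi(4,3)-\Psi(3,3)\bigr]+\bigl[\Psi(2,1)-\Psi(3,1)\bigr]+\bigl[\Psi(4,1)-\Psi(3,1)\bigr]\\
&\qquad=\Bigl(8-\tfrac{729}{64}\Bigr)+2\Bigl(\tfrac{1728}{125}-\tfrac{729}{64}\Bigr)+\Bigl(8-\tfrac{27}{8}\Bigr)+\Bigl(\tfrac{64}{27}-\tfrac{27}{8}\Bigr)\approx+5.10>0,
\end{align*}
so your first transformation strictly \emph{increases} $AZI$; attaching triangles to two of the remaining pendent vertices embeds the identical configuration in $\mathcal{C}_{14,2}$, so this is not an artifact of the tree base case. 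The mechanism you cite cannot repair this: Lemma \ref{L5} compares $\Psi(x,y)-\Psi(x-p,y)$, i.e.\ it controls collateral terms at a vertex whose degree \emph{drops}; it says nothing about the terms at $u$, whose degree \emph{rises}, and by Lemma \ref{L7}(3) those terms genuinely increase when $u$ has neighbours of degree $\geq3$, while the decrease of $\Psi(1,y)$ makes the pendent terms left behind at $z$ increase as well. The cycle-shortening move suffers the same defect, and in addition your two families never relocate a triangle hanging at a vertex other than $u$, so even if they were monotone they would not yield ``every cycle is a triangle through $u$.''

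This is precisely the trap the paper's proof is engineered to avoid: it never reroutes an edge between two distinct vertices. Its reductions only delete vertices --- smoothing out a degree-2 cycle vertex (where every affected edge has an endpoint of degree 2 and contributes exactly 8), deleting the two degree-2 vertices of a triangle, or deleting all pendent vertices at a support vertex --- so every collateral degree change is a decrease, exactly the one-sided situation Lemma \ref{L5} does control (the terms are $\geq f(x,2)=0$). The theorem then follows by double induction on $(n,k)$, comparing $F(n-1,k)$, $F(n-2,k-1)$ and $F(n-p,k)$ with $F(n,k)$ via Lemma \ref{L6}; the universal vertex is never postulated but emerges from the equality analysis. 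To salvage your architecture you would need a new lemma bounding the increasing-degree side of the ledger, and the example above shows no such lemma can make ``move any branch to the maximum-degree vertex'' uniformly $AZI$-decreasing.
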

\begin{proof}
We will prove the theorem by double induction on $n$ and $k$. For $k=0$ and $k=1$, the result holds due to Lemma \ref{L77} and Lemma \ref{L88} respectively. Note that if $k\geq2$ then $n\geq5$. For $n=5$ there is only one cactus which is isomorphic to $G^{0}(5,2)$ and hence the theorem holds in this case. Let us assume that $G\in\mathcal{C}_{n,k}$ where $k\geq2$ and $n\geq6$. Then there are two possibilities:

\textit{Case 1.} If $G$ does not contain any pendent vertex. Then there exist three vertices $u,v$ and $w$ on some cycle of $G$ such that $u$ is adjacent with both the vertices $v,w$ where $d_{u}=d_{v}=2$ and $d_{w}=x\geq3$. Here we consider two subcases:\\
\textit{Subcase 1.1.} If there is no edge between $v$ and $w$. Then note that the graph $G'$ obtained from $G$ by removing the vertex $u$ and adding the edge $vw$, belongs to the collection $\mathcal{C}_{n-1,k}$. Hence by bearing in mind the Lemma \ref{L6}, inductive hypothesis and the fact $n\geq6$, one have
\begin{eqnarray*}
AZI(G)-F(n,k)&=& AZI(G')+8-F(n,k)\geq F(n-1,k)-F(n,k)+8\\
&=& (n-2k-1)\left[ \left(\frac{n-2}{n-3}\right)^{3}-\left(\frac{n-1}{n-2}\right)^{3}\right]\\
&&+\left[8-\left(\frac{n-2}{n-3}\right)^{3}\right]\geq8-\left(\frac{n-2}{n-3}\right)^{3}>0.
\end{eqnarray*}
\textit{Subcase 1.2.} If there is an edge between $v$ and $w$. Let $G^{*}$ be the graph obtained from $G$ by removing both the vertices $u,v$, then $G^{*}$ belongs to the class $\mathcal{C}_{n-2,k-1}$. Let $N_{G}(w)=\{u,v,u_{1},u_{2},...,u_{x-2}\}$. Then by virtue of Lemma \ref{L5}, Lemma \ref{L6} and inductive hypothesis, one have
\begin{eqnarray*}
AZI(G)-F(n,k)&=&AZI(G^{*})-F(n,k)+24\\
&+&\displaystyle\sum_{i=1}^{x-2}\left[ \left(\frac{xd_{u_{i}}}{x+d_{u_{i}}-2}\right)^{3}-\left(\frac{(x-2)d_{u_{i}}}{x+d_{u_{i}}-4}\right)^{3}\right]\\
&\geq& F(n-2,k-1)-F(n,k)+24\\
&=&(n-2k-1)  \left[ \left(\frac{n-3}{n-4}\right)^{3}-\left(\frac{n-1}{n-2}\right)^{3}\right]\geq0.
\end{eqnarray*}
The equality $AZI(G)-F(n,k)=0$ holds if and only if $G^{*}\cong G^{0}(n-2,k-1)$, $d_{u_{i}}=2$ for all $i$ (where $1\leq i\leq x-2$) and $n-2k-1=0$.

\textit{Case 2.} If $G$ has at least one pendent vertex. Let $u_{0}$ be the pendent vertex adjacent with a vertex $v_{0}$ and assume that $N_{G}(v_{0})=\{u_{0},u_{1},u_{2},...,u_{y-1}\}$. Without loss of generality one can suppose that $d_{u_{i}}=1$ for $0\leq i\leq p-1$ and $d_{u_{i}}\geq2$ for $p\leq i\leq y-1$. Let $G^{+}$ be the graph obtained from $G$ by removing the pendent vertices $u_{0},u_{1},u_{2},...,u_{p-1}$, then $G^{+}\in\mathcal{C}_{n-p,k}$ and hence one have

\begin{eqnarray*}
AZI(G)-F(n,k)&=&AZI(G^{+})+p\left(\frac{y}{y-1}\right)^{3}-F(n,k)\\
&&+\displaystyle\sum_{i=p}^{y-1}\left[ \left(\frac{yd_{u_{i}}}{y+d_{u_{i}}-2}\right)^{3}-\left(\frac{(y-p)d_{u_{i}}}{y+d_{u_{i}}-p-2}\right)^{3}\right].
\end{eqnarray*}
By virtu of Lemma \ref{L5} and inductive hypothesis, we have
\begin{equation}\label{ineq1}
AZI(G)-F(n,k)\geq F(n-p,k)-F(n,k)+p\left(\frac{y}{y-1}\right)^{3},
\end{equation}
with equality if and only if $G^{+}\cong G^{0}(n-p,k)$ and $d_{u_{i}}=2$ for all $i$ (where $p\leq i\leq y-1$). From Lemma \ref{L6} and inequality (\ref{ineq1}), it follows that
\[AZI(G)-F(n,k)\geq(n-p-2k-1)  \left[ \left(\frac{n-p-1}{n-p-2}\right)^{3}-\left(\frac{n-1}{n-2}\right)^{3}\right]\geq0,\]
with first equality if and only if $G^{+}\cong G^{0}(n-p,k)$ and $y=n-1$, and the last equality holds if and only if $n-p-2k-1=0$. This completes the proof.

\end{proof}

\vspace*{0.35cm}
Note that $\frac{\partial F(n,k)}{\partial k}=-2\left(\frac{n-1}{n-2}\right)^{3}+24$ is positive for all $n\geq4$, which means that $F(n,k)$ is increasing in $k$ (where $k\geq0$). Hence $F(n,k)$ attains it minimum value at $k=0$ and therefore from Theorem \ref{t1} one have:

\begin{cor}
If $G$ is any cactus with $n\geq4$ vertices, then
$$AZI(G)\geq F(n,0)$$
with equality if and only if $G\cong G^{0}(n,0)$.
\end{cor}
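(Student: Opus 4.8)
The plan is to obtain this statement as a direct consequence of Theorem \ref{t1} by optimizing the auxiliary quantity $F(n,k)$ over the cycle count $k$. First I would observe that every cactus $G$ with $n\geq 4$ vertices has a well-defined number of cycles $k\geq 0$, so $G\in\mathcal{C}_{n,k}$ for that particular value of $k$. Theorem \ref{t1} then applies and gives $AZI(G)\geq F(n,k)$, with equality precisely when $G\cong G^{0}(n,k)$.

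The second step is to show that, among all admissible $k\geq 0$, the quantity $F(n,k)$ is smallest at $k=0$. Since $F(n,k)=(n-2k-1)\left(\frac{n-1}{n-2}\right)^{3}+24k$ is an affine function of $k$ for fixed $n$, it suffices to determine the sign of the coefficient of $k$, namely $24-2\left(\frac{n-1}{n-2}\right)^{3}$. For $n\geq 4$ the ratio $\frac{n-1}{n-2}$ is decreasing and attains its maximum $\frac{3}{2}$ at $n=4$, so $\left(\frac{n-1}{n-2}\right)^{3}\leq\frac{27}{8}<12$, and the coefficient is strictly positive. Hence $F(n,k)$ is strictly increasing in $k$, which yields $F(n,k)\geq F(n,0)$ with equality if and only if $k=0$.

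Chaining the two inequalities gives $AZI(G)\geq F(n,k)\geq F(n,0)$ for every cactus $G$ on $n\geq 4$ vertices, establishing the bound. For the equality characterization I would note that equality in the full chain forces equality in both steps at once: equality in the first step requires $G\cong G^{0}(n,k)$ by Theorem \ref{t1}, while equality in the second requires $k=0$. Combining these, the unique extremal graph is $G\cong G^{0}(n,0)\cong S_{n}$, which is exactly the claimed characterization.

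I do not expect a genuine obstacle here, since the corollary merely extracts the global minimizer (over all $k$) from the fixed-$k$ result already proved in Theorem \ref{t1}. The only point deserving a moment's care is verifying the strict positivity of the $k$-coefficient uniformly in $n\geq 4$, i.e. the elementary bound $\left(\frac{n-1}{n-2}\right)^{3}<12$; this is immediate from the monotonicity of $\frac{n-1}{n-2}$ and its value at $n=4$.
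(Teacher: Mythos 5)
Your proposal is correct and follows essentially the same route as the paper: the paper likewise observes that the coefficient of $k$ in $F(n,k)$, namely $24-2\left(\frac{n-1}{n-2}\right)^{3}$, is positive for all $n\geq4$, so that $F(n,k)$ is increasing in $k$, attains its minimum at $k=0$, and the corollary then follows directly from Theorem \ref{t1}. Your explicit bound $\left(\frac{n-1}{n-2}\right)^{3}\leq\frac{27}{8}$ and your careful handling of the equality case merely spell out details the paper leaves implicit.
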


Now, we consider the problem of characterizing the graph(s) having maximum $AZI$ value over the collection $\mathcal{C}_{n,k}$. Let us start from the special case $k=0$, that is $\mathcal{C}_{n,0}=$ the family of all $n$-vertex trees. It can be easily checked that for $n=4,5,6$, the path $P_{n}$ has the maximum $AZI$ value in $\mathcal{C}_{n,0}$ and for $n=7,8,9,$ all those $n$-vertex trees in which every edge is incident with at least one vertex of degree 2, have maximum $AZI$ value in $\mathcal{C}_{n,0}$. Hence the graph with maximum $AZI$ value in the collection $\mathcal{C}_{n,0}$ needs not to be unique. Also, note that for the $n$-vertex tree $T^{+}$ depicted in Figure \ref{f2} one have $AZI(T^{+})=\left(\frac{9}{4}\right)^{3}+8(n-2)>8(n-1)=AZI(P_{n})$ for all $n\geq10$.
\renewcommand{\figurename}{Fig.}
 \begin{figure}[H]
   \centering
    \includegraphics[width=3.5in, height=1.2in]{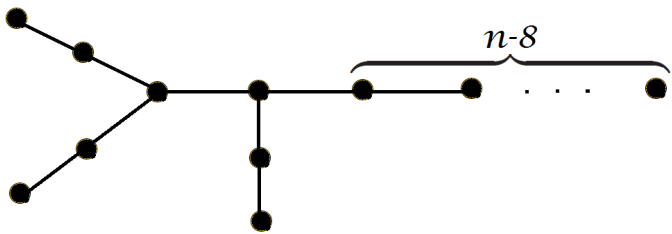}
    \caption{The $n$-vertex tree $T^{+}$ where $n\geq10$.}
     \label{f2}
      \end{figure}
Hence for $n\geq10$, the $n$-vertex tree with maximum $AZI$ value must be different from the path $P_{n}$. At this time, the problem of finding graph(s) with maximum $AZI$ value over the class $\mathcal{C}_{n,0}$ (and hence over the class $\mathcal{C}_{n,k}$) seems to be very hard and we leave it for future work. However, here we prove some results related to the structure of the $n$-vertex tree(s) having maximum $AZI$ value (given in Theorem \ref{t22}). From the derived results, it seems that the $n$-vertex tree with minimum $ABC$ index will actually have the maximum $AZI$ value. But, the problem of finding $n$-vertex tree with minimum $ABC$ index is still open and dozens of papers have been devoted to this open problem, see for example the survey (Gutman \textit{et al.}, 2013), articles (Dimitrov, 2014; Gutman, Furtula \& Ivanovi\'{c}, 2012; Ma \textit{et al.}, 2015) and the related references cited therein.

Now, let us prove some structural properties of the $n$-vertex tree(s) having maximum $AZI$ value. For this, we need the following elementary lemma.

\begin{lem}\label{L7}(Huang, Liu \textsl{\&} Gan, 2012)
If $\Psi(x,y)=\left(\frac{xy}{x+y-2}\right)^{3}$ then
\begin{enumerate}
  \item $\Psi(1,y)$ is decreasing for $y\geq2$.
  \item $\Psi(2,y)=8$ for all $y\geq2$.
  \item If $x\geq3$ is fixed, then $\Psi(x,y)$ is increasing for $y\geq3$.
\end{enumerate}
\end{lem}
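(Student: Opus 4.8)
The plan is to dispatch the three items separately, since each one reduces either to a direct substitution or to a single derivative sign.

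For item (1) I would set $x=1$, obtaining $\Psi(1,y)=\left(\frac{y}{y-1}\right)^{3}$, which is exactly the function appearing in Lemma \ref{L6}; its monotonic decrease on $[2,\infty)$ therefore follows at once, or equivalently from the observation that $\frac{y}{y-1}=1+\frac{1}{y-1}$ is positive and strictly decreasing. For item (2) I would substitute $x=2$, so that the denominator $x+y-2$ collapses to $y$ and $\Psi(2,y)=\left(\frac{2y}{y}\right)^{3}=8$ for every $y\geq2$, as claimed. Neither of these requires more than a line.

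Item (3) is the only part carrying any content. Here I would keep $x\geq3$ fixed, write $u(y)=\frac{xy}{x+y-2}$, and differentiate via the quotient rule to get $\frac{\partial u}{\partial y}=\frac{x(x-2)}{(x+y-2)^{2}}$; chaining through the cube then gives $\frac{\partial \Psi}{\partial y}=3\,u(y)^{2}\cdot\frac{x(x-2)}{(x+y-2)^{2}}$. Since $x\geq3$ makes $x(x-2)>0$ while $u(y)^{2}$ and $(x+y-2)^{2}$ are positive on the relevant domain, the derivative is strictly positive, so $\Psi(x,\cdot)$ is increasing. I expect no genuine obstacle: the computation is routine, and in fact the argument uses only $x>2$ and a positive denominator, so the stated restriction $y\geq3$ is stronger than necessary — the same sign analysis already yields monotonicity for all $y\geq2$ whenever $x\geq3$.
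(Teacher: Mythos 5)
Your proof is correct, but there is nothing in the paper to compare it against: the paper does not prove this lemma at all, importing it verbatim from Huang, Liu \& Gan (2012). Your argument therefore supplies a self-contained verification that the paper omits. The details check out. Items (1) and (2) are immediate substitutions: $\Psi(1,y)=\left(\frac{y}{y-1}\right)^{3}$ is precisely the function of Lemma \ref{L6} (itself stated in the paper without proof, but settled by your observation that $\frac{y}{y-1}=1+\frac{1}{y-1}$ is positive and strictly decreasing), and $\Psi(2,y)=\left(\frac{2y}{y}\right)^{3}=8$. For item (3), the quotient rule indeed gives $u'(y)=\frac{x(x-2)}{(x+y-2)^{2}}$ for $u(y)=\frac{xy}{x+y-2}$, so $\frac{\partial\Psi}{\partial y}=3\,u(y)^{2}\,u'(y)>0$ whenever $x\geq3$, since then $x(x-2)>0$, $u(y)>0$, and the denominator is positive. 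Your closing remark is also accurate: the sign analysis uses only $x>2$ and $x+y-2>0$, so the monotonicity in item (3) holds already for $y\geq2$; the range $y\geq3$ in the statement is simply what the applications in the paper require, not what the proof forces.
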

Following Gutman, Furtula \& Ivanovi\'{c} (2012), we define some terms: A pendent vertex adjacent with a vertex having degree greater than 2 is called star-type pendent vertex. A path $v_{1}v_{2}...v_{l+1}$ of length $l$ in a graph $G$ is called pendent if one of the degrees $d_{v_{1}},d_{v_{l+1}}$ is 1 and other is greater than 2, and $d_{v_{i}}=2$ for all $i$ where $2\leq i\leq l$. A path $v_{1}v_{2}...v_{l+1}$ of length $l$ in a graph $G$ is called internal if both the degrees $d_{v_{1}},d_{v_{l+1}}$ are greater than 2 and $d_{v_{i}}=2$ for all $i$ where $2\leq i\leq l$.

\begin{thm}\label{t22}
For $n\geq10$, let $T^{*}\in\mathcal{C}_{n,0}$ be the tree with maximum $AZI$ value, then
\begin{enumerate}
  \item $T^{*}$ does not contain any internal path of length $l\geq2$.
  \item $T^{*}$ does not contain any pendent path of length $l\geq4$.
  \item $T^{*}$ contains at most one pendent path of length 3.
\end{enumerate}
\end{thm}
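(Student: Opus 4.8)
The plan is to argue each part by contradiction: assuming $T^{*}$ violates the stated property, I exhibit a tree in $\mathcal{C}_{n,0}$ with strictly larger $AZI$. The whole argument rests on one bookkeeping observation that removes almost all of the arithmetic. Since $\Psi(x,2)=\bigl(\tfrac{2x}{x}\bigr)^{3}=8$ and $\Psi(2,y)=8$ (Lemma \ref{L7}(2)), \emph{every} edge with an endpoint of degree $2$ contributes exactly $8$. An edge joining two vertices of degree $\geq 3$ (call it \emph{heavy}) contributes $\Psi(x,y)\geq\Psi(3,3)=(9/4)^{3}>8$ by Lemma \ref{L7}(3), whereas a pendent edge at a vertex of degree $\geq 3$ (a \emph{star} edge) contributes $\Psi(1,y)<8$ by Lemma \ref{L7}(1). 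As $T^{*}$ has $n-1$ edges,
\[
AZI(T^{*})=8(n-1)+\sum_{\text{heavy }uv}\bigl(\Psi(d_{u},d_{v})-8\bigr)-\sum_{\text{star }uv}\bigl(8-\Psi(d_{u},d_{v})\bigr).
\]
Each modification below keeps $n$ fixed and is designed to enlarge the first sum (create or grow a heavy edge) while feeding nothing into the second.

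For part 1, suppose $T^{*}$ has an internal path $v_{1}v_{2}\cdots v_{l+1}$ with $l\geq 2$, $d_{v_{1}}=a\geq 3$, $d_{v_{l+1}}=b\geq 3$. I would delete the $l-1$ interior degree-$2$ vertices, add the edge $v_{1}v_{l+1}$, and re-insert the $l-1$ freed vertices at the tip of any pendent path. The $l$ deleted path-edges and the $l-1$ re-inserted edges all carry value $8$ and cancel, the former tip edge can only gain, and the new edge $v_{1}v_{l+1}$ is heavy, so the net change is $\Psi(a,b)-\Psi(1,d_{w})\geq(9/4)^{3}-8>0$, contradicting maximality. This part uses only Lemmas \ref{L6} and \ref{L7}.

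Parts 2 and 3 rely on two consequences of part 1. First, part 1 forces the branch vertices (degree $\geq 3$) to induce a connected subtree in which consecutive branch vertices are adjacent; hence if $T^{*}$ has at least two branch vertices then each one carries a heavy edge, while if it has only one then $T^{*}$ is a spider with $AZI\leq 8(n-1)<AZI(T^{+})$, impossible for $n\geq 10$. Second, I would record a small lemma: if $T^{*}$ contains a pendent path of length $\geq 3$, then it has no star-type pendent vertex, for otherwise moving the tip of that path onto the star-type pendant converts the star edge (value $<8$) into an edge of value $8$ and replaces the removed tip edge by a new edge of value $8$, while the donor path—shortened to length $\geq 2$—creates no new star edge; the net gain is $8-\Psi(1,d_{w})>0$. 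Since the hypotheses of both parts supply a pendent path of length $\geq 3$, in each case I may assume $T^{*}$ has a heavy edge and no star-type pendent vertex.

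The finishing moves are then uniform. For part 2, given a pendent path of length $l\geq 4$ ending at a branch vertex $v_{l+1}$ with heavy neighbour $z$, I delete its two tip vertices and attach a fresh pendent path of length $2$ at $v_{l+1}$, raising $d_{v_{l+1}}$ by one; the $8$-valued edges cancel, the edge $v_{l+1}z$ strictly increases (Lemma \ref{L7}(3)), all remaining edges at $v_{l+1}$ weakly increase \emph{because there are no degree-$1$ neighbours}, and the shortened path still ends at a degree-$2$ vertex, so no star edge appears—a strict gain. For part 3, two pendent paths of length $3$ let me free one tip vertex from each and attach a length-$2$ pendent path at any endpoint of a heavy edge, again raising a heavy edge strictly while every incidental edge stays at $8$. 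The main obstacle throughout is precisely the clause I keep invoking: increasing a vertex's degree helps on its heavy edges but would \emph{hurt} on any incident star edge, so the argument hinges on first eliminating star-type pendants (through the length-$\geq 3$ lemma) and on guaranteeing, via part 1, that the vertex being enlarged genuinely carries a heavy edge.
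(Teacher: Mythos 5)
Your proof is correct, and although it runs on the same engine as the paper's --- contradiction via $AZI$-increasing local rearrangements, the monotonicity facts of Lemma \ref{L7}, and the comparison with the tree $T^{+}$ of Figure \ref{f2} to kill the spider case --- the organization is genuinely different, and in places cleaner. For Part (1) the paper splits on whether a star-type pendent vertex exists and uses a degree-sequence-preserving subdivision move; for internal paths of length $l\geq 3$ its move gives zero change per step, so it must ``repeat the transformation a sufficient number of times'' to reach a contradiction. Your single move (contract the internal path onto the new heavy edge $v_{1}v_{l+1}$ and re-insert the freed vertices at the tip of a pendent path) produces a strict gain of at least $\left(\frac{9}{4}\right)^{3}-8>0$ in one shot, with no case split and no iteration; note only that you should say explicitly that a pendent path exists to receive the vertices (any leaf determines one, since the hypothesis supplies a branch vertex) and that the gain is strict whether the receiving tip edge is a star edge (value $<8$) or an ordinary one (value $8$), which your ``can only gain'' remark does cover. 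For Parts (2) and (3) the paper works around the maximum-degree vertex $w$ and splits on whether $w$ has a neighbour of degree $\geq 3$, whereas you extract from Part (1) the structural fact that the branch vertices induce a connected subtree, so with at least two of them every branch vertex carries a heavy edge; this is what licenses your uniform ``transplant the tip vertices onto a heavy-edge endpoint'' move, whose strict gain comes from Lemma \ref{L7}(3) exactly as in the paper's Subcase 2.1. Your standalone lemma (a maximal tree with a pendent path of length $\geq 3$ has no star-type pendent vertex) is the paper's Case 1 computation repackaged, and isolating it is what lets you treat Parts (2) and (3) identically. The one place the paper is slicker is Part (3): it deletes $v_{1}v_{2}$, adds $v_{1}u_{1}$, observes that $AZI$ is unchanged, and then simply invokes Part (2) on the new maximal tree, rather than redoing a gain computation.
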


\begin{proof} (1). Suppose to the contrary that $T^{*}$ contains the internal path $v_{1}v_{2}...v_{l+1}$ of length $l\geq2$ where $d_{v_{1}},d_{v_{l+1}}\geq3$. We consider two cases:

\textit{Case 1.} If $T^{*}$ contains at least one star-type pendent vertex. Let $u$ be a star-type pendent vertex adjacent with a vertex $v$ (then $d_{v}\geq3$) and suppose that $T^{(1)}$ is the tree obtained from $T^{*}$ by moving the vertex $v_{2}$ on the edge $uv$ and adding the edge $v_{1}v_{3}$. Observe that both the trees $T^{(1)}$ and $T^{*}$ have same degree sequence. Hence, by virtue of Lemma \ref{L7} we have
\begin{eqnarray*}
AZI(T^{*})-AZI(T^{(1)})&=&\Psi(d_{v_{1}},d_{v_{2}})+\Psi(d_{v_{2}},d_{v_{3}})+\Psi(d_{u},d_{v})\\
&&-\Psi(d_{v_{1}},d_{v_{3}})-\Psi(d_{u},d_{v_{2}})-\Psi(d_{v_{2}},d_{v})\\
&=&\Psi(1,d_{v})-\Psi(d_{v_{1}},d_{v_{3}})<0,
\end{eqnarray*}
which is a contradiction to the maximality of $AZI(T^{*})$.

\textit{Case 2.} If $T^{*}$ does not contain any star-type pendent vertex. Suppose that $T^{(2)}$ is the tree obtained from $T^{*}$ by moving the vertex $v_{2}$ on any pendent edge and adding the edge $v_{1}v_{3}$. Then both the trees $T^{(2)}$ and $T^{*}$ have same degree sequence. If $l=2$, then by using Lemma \ref{L7}(2) and Lemma \ref{L7}(3), one have
\begin{eqnarray*}
AZI(T^{*})-AZI(T^{(2)})&=&\Psi(d_{v_{1}},d_{v_{2}})+\Psi(d_{v_{2}},d_{v_{3}})-\Psi(d_{v_{1}},d_{v_{3}})-8\\
&=&\Psi(d_{v_{1}},2)-\Psi(d_{v_{1}},d_{v_{3}})<0,
\end{eqnarray*}
which is a contradiction.\\
If $l\geq3$, then it can be easily checked that $AZI(T^{*})-AZI(T^{(2)})=0$. After repeating the above transformation sufficient number of times, one arrives at a tree ${T}^{(3)}$ such that $AZI(T^{*})-AZI(T^{(3)})<0$. This contradicts the maximality of $AZI(T^{*})$, which completes the proof of Part(1).

\vspace*{0.35cm}
(2). Let us suppose to the contrary that $T^{*}$ contains the pendent path $v_{1}v_{2}...v_{l+1}$ of length $l\geq4$ where $d_{v_{1}}=1$. We consider two cases:

\textit{Case 1.} If $T^{*}$ contains at least one star-type pendent vertex. Let $u$ be a star-type pendent vertex adjacent with a vertex $v$. Let $T^{(1)}$ be the tree obtained from $T^{*}$ by moving the vertex $v_{2}$ on the edge $uv$ and adding the edge $v_{1}v_{3}$. Note that both the trees $T^{(1)}$ and $T^{*}$ have same degree sequence. Hence, by bearing in mind the Lemma \ref{L7}(1), Lemma \ref{L7}(2) and the fact $d_{v}\geq3$ one have
\begin{eqnarray*}
AZI(T^{*})-AZI(T^{(1)})&=&\Psi(d_{v_{1}},d_{v_{2}})+\Psi(d_{v_{2}},d_{v_{3}})+\Psi(d_{u},d_{v})\\
&&-\Psi(d_{v_{1}},d_{v_{3}})-\Psi(d_{u},d_{v_{2}})-\Psi(d_{v_{2}},d_{v})\\
&=&\Psi(1,d_{v})-8<0,
\end{eqnarray*}
which is a contradiction to the maximality of $AZI(T^{*})$.

\textit{Case 2.} If $T^{*}$ does not contain any star-type pendent vertex. Let $w\in V(T^{*})$ has the maximum degree and $N_{T^{*}}(w)=\{w_{1},w_{2},...,w_{\Delta}\}$. Note that $d_{w_{i}}\geq2$ for all $i$ where $1\leq i\leq \Delta$.

\textit{Subcase 2.1} If at least one neighbor of $w$ has degree greater than 2. Suppose that $T^{(2)}$ is the tree obtained from $T^{*}$ by removing the edge $v_{2}v_{3}$ and adding the edge $v_{2}w$. Then by using Lemma \ref{L7}(2) and Lemma \ref{L7}(3), one have
\begin{eqnarray*}
AZI(T^{*})-AZI(T^{(2)})&=&\Psi(d_{v_{2}},d_{v_{3}})+\Psi(d_{v_{3}},d_{v_{4}})+\sum_{i=1}^{\Delta}\Psi(d_{w},d_{w_{i}})\\
&&-\Psi(d_{v_{2}},d_{w}+1)-\Psi(d_{v_{3}}-1,d_{v_{4}})-\sum_{i=1}^{\Delta}\Psi(d_{w}+1,d_{w_{i}})\\
&=&\sum_{i=1}^{\Delta}\left[\Psi(d_{w},d_{w_{i}})-\Psi(d_{w}+1,d_{w_{i}})\right]<0,
\end{eqnarray*}
which is again a contradiction.

\textit{Subcase 2.2} If $d_{w_{i}}=2$ for all $i$ where $1\leq i\leq \Delta$. Then Part(1) suggests that $T^{*}$ must be a starlike tree. Note that each branch of $T^{*}$ has length at least 2 and $w=v_{l+1}$ (in this case), hence $AZI(T^{*})=8(n-1)<\left(\frac{9}{4}\right)^{3}+8(n-2)=AZI(T^{+})$
where the tree $T^{+}$ is depicted in the Figure \ref{f2}. This is a contradiction to the maximality of $AZI(T^{*})$. This completes the proof of Part(2).

\vspace*{0.35cm}
(3). Suppose to the contrary that $T^{*}$ contains two pendent paths $v_{1}v_{2}v_{3}v_{4}$ and $u_{1}u_{2}u_{3}u_{4}$ of length three, where $d_{v_{1}}=d_{u_{1}}=1$. Then for the tree $T^{'}$ obtained from $T^{(*)}$ by deleting the edge $v_{1}v_{2}$ and adding the edge $v_{1}u_{1}$ one have $AZI(T^{*})=AZI(T^{'})$, which means that $T^{'}$ has also maximum $AZI$ value. But, $T^{'}$ contains a pendent path of length 4, namely $v_{1}u_{1}u_{2}u_{3}u_{4}$. This contradicts the Part(2).

\end{proof}

\vspace*{0.35cm}
Here it is worth mentioning that the properties given in the Theorem \ref{t22} also hold for the tree(s) having minimum $ABC$ index. We end this note with the following conjecture:

\begin{conj}
The $n$-vertex tree $T$ has the maximum $AZI$ value if and only if $T$ has the minimum $ABC$ index.
\end{conj}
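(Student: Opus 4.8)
The plan is to exploit a single algebraic identity that couples the two indices edge by edge. For an edge whose endpoints have degrees $a$ and $b$, write $t(a,b)=\sqrt{\frac{a+b-2}{ab}}$ for its contribution to $ABC$. A direct computation gives $t(a,b)^{-6}=\left(\frac{ab}{a+b-2}\right)^{3}=\Psi(a,b)$, so the contribution of the \emph{same} edge to $AZI$ is exactly $t(a,b)^{-6}$. Hence, setting $t_{e}=t(d_{u},d_{v})$ for each edge $e=uv$, one has the parallel representations $ABC(G)=\sum_{e}t_{e}$ and $AZI(G)=\sum_{e}t_{e}^{-6}$, with every $t_{e}\in(0,1)$ because $(a-1)(b-1)+1>0$. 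Since $\tau\mapsto\tau^{-6}$ is strictly decreasing on $(0,1)$, shrinking an individual edge weight $t_{e}$ simultaneously decreases $ABC$ and increases $AZI$. This is the heuristic engine behind the conjecture: both extremal problems are driven toward small edge weights, which are produced precisely by placing edges between vertices of large degree.

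The first concrete step is to reduce both extremal problems to a common structural normal form. Theorem \ref{t22} already records several necessary properties of the $AZI$-maximal tree, and the same properties are known for the $ABC$-minimal tree. I would assemble a complete repertoire of elementary tree transformations — path contractions of the kind used in Lemma \ref{L5} and Lemma \ref{L6}, branch-shifting moves of the kind used in Theorem \ref{t22}, and degree-concentration moves that redistribute pendent structure — and prove, for each move, the two-sided statement that it decreases $ABC$ precisely when it increases $AZI$. Because each such move alters only the finitely many edges incident to the vertices it touches, its effect on both indices is read off from the same small set of changed weights $t_{e}$, and the relation $\Psi(a,b)=t(a,b)^{-6}$ lets one try to align the sign of $\Delta(ABC)$ with that of $-\Delta(AZI)$. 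If the repertoire is complete, meaning every tree can be driven to the extremal configuration by a monotone sequence of these moves, then the $ABC$-minimal and $AZI$-maximal families must coincide, which is the asserted equivalence.

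The hard part, and the reason this remains a conjecture, is that neither endpoint of the equivalence is known in closed form: the minimum-$ABC$ tree on $n$ vertices is a notorious open problem, and Theorem \ref{t22} supplies only necessary conditions for the $AZI$-maximal tree, not a full characterization. The per-edge heuristic is genuinely only local. A single move typically perturbs several edge weights at once, some growing and some shrinking, and $\sum_{e}t_{e}$ and $\sum_{e}t_{e}^{-6}$ are different symmetric functions of the resulting weight vector; a move that lowers $\sum_{e}t_{e}$ can in principle fail to raise $\sum_{e}t_{e}^{-6}$ when the weights that grow sit near $1$ while those that shrink sit near $0$. The crux is therefore to establish the equivalence of sign for the moves that redistribute degree between non-adjacent branches, where the two objectives are least obviously coupled; controlling the competition between the shrinking and the growing weights there is where I expect the real difficulty to lie.

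Because explicitly identifying the extremal tree appears out of reach, the most promising route is to prove the equivalence without naming the tree at all: an exchange (coupling) argument showing that every $ABC$-decreasing move is $AZI$-increasing, and conversely, would force the two optima to be the same set, sidestepping both unsolved characterization problems. Carrying this out rigorously amounts to verifying the sign alignment uniformly over all admissible degree pairs appearing in the changed edges — a finite but delicate monotonicity check in the spirit of Lemma \ref{L5} and Lemma \ref{L7}. It is also conceivable that the equivalence holds only up to the non-uniqueness already exhibited for small $n$, so the final statement may ultimately need to be read as an equality of extremal families rather than of individual trees.
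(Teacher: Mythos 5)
The statement you are addressing is presented in the paper as a \emph{conjecture}: the paper contains no proof of it, only circumstantial evidence (the structural properties established in Theorem \ref{t22} are also known to hold for trees with minimum $ABC$ index). So there is no proof of record to compare against, and your text is, by its own admission, a research programme rather than a proof. Your opening identity is correct and genuinely useful: with $t(a,b)=\sqrt{(a+b-2)/(ab)}$ one indeed has $\Psi(a,b)=t(a,b)^{-6}$, and $t_{e}\in(0,1)$ for every edge of a tree on $n\geq3$ vertices, so $ABC$ and $AZI$ are sums of the same edge weights composed with mutually antitone maps. This exact coupling does not appear in the paper and is a reasonable lens on why the conjecture is plausible.

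The genuine gap is that the two steps your plan requires are precisely the open problems, and neither is discharged. First, pointwise antitonicity of $\tau\mapsto\tau^{-6}$ does not transfer to the sums: any nontrivial transformation changes several weights simultaneously, some increasing and some decreasing, and since $\tau^{-6}$ blows up as $\tau\to0$ while being comparatively flat near $\tau=1$, a move that decreases $\sum_{e}t_{e}$ may decrease $\sum_{e}t_{e}^{-6}$ as well. You concede this, but the concession is fatal: the claimed sign alignment for the degree-redistributing moves is never proved, and it is the entire content of the conjecture; none of the paper's lemmas (Lemma \ref{L5}, Lemma \ref{L7}) gives the needed two-sided control of $ABC$ and $AZI$ simultaneously. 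Second, even granting sign alignment for every move in your repertoire, the conclusion that the two optima coincide requires completeness --- from every non-$ABC$-minimal tree, some $ABC$-decreasing move of the repertoire must be available --- and no such complete repertoire is known; that is exactly why the minimum-$ABC$ tree problem has remained open despite dozens of papers. The clause ``if the repertoire is complete'' is therefore a hypothesis you cannot verify, not a step in a proof. Finally, as you yourself note, the ``if and only if'' between individual trees cannot literally survive the non-uniqueness the paper exhibits for $7\leq n\leq 9$ (where every $n$-vertex tree in which each edge meets a vertex of degree $2$ attains the maximum $AZI$); reading the conjecture as an equality of extremal families is a sensible reformulation, but a reformulation is not a proof. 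In short: correct coupling identity and an accurate diagnosis of where the difficulty lies, but the statement remains exactly as open after your argument as before it.
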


\end{document}